\newtheorem{thm}{Theorem}[section]
\newtheorem{lem}[thm]{Lemma}
\newtheorem{cor}[thm]{Corollary}
\theoremstyle{definition}
\newcommand{\R}{\mathbb{R}}
\newcommand{\E}{\mathbb{E}}
\newcommand{\set}[1]{\left\{#1\right\}}
\title{Any two-coloring of the plane contains monochromatic 3-term arithmetic progressions}
\author{Gabriel Currier}
\address{Department of Mathematics \\ University of British Columbia \\ Vancouver  V6T 1Z2 \\ Canada}
\email{currierg@math.ubc.ca}
\author{Kenneth Moore}
\address{Department of Mathematics \\ University of British Columbia \\ Vancouver  V6T 1Z2 \\ Canada}
\email{kjmoore@math.ubc.ca}
\author{Chi Hoi Yip}
\address{Department of Mathematics \\ University of British Columbia \\ Vancouver  V6T 1Z2 \\ Canada}
\email{kyleyip@math.ubc.ca}
\subjclass[2020]{05D10, 52C10}
\keywords{Euclidean Ramsey theory}
\begin{document}

\begin{abstract}
A conjecture of Erd\H{o}s, Graham, Montgomery, Rothschild, Spencer, and Straus states that, with the exception of equilateral triangles, any two-coloring of the plane will have a monochromatic congruent copy of every three-point configuration. This conjecture is known only for special classes of configurations. In this manuscript, we confirm one of the most natural open cases; that is, every two-coloring of the plane admits a monochromatic congruent copy of any $3$-term arithmetic progression.
\end{abstract}

\maketitle

\section{Introduction}

Let $\E^n$ denote the $n$-dimensional Euclidean space, that is, $\R^n$ equipped with the Euclidean norm. Suppose that we have a coloring of $\E^n$ using $r$ colors. The field of Euclidean Ramsey theory concerns itself with the question of what types of configurations (monochromatic, rainbow, etc.) must be present in such a coloring. One of the most commonly studied configurations is denoted $\ell_m$, and consists of $m$ collinear points with consecutive points of distance one apart. In other words, $\ell_m$ is an $m$-term arithmetic progression with common difference $1$. In this note, we prove the following.

\begin{thm}\label{thm:ell3}
In any two-coloring of $\E^2$, there exists a monochromatic congruent copy of $\ell_3$.
\end{thm}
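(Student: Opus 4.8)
The plan is to argue by contradiction: suppose $c$ is a two-coloring of $\E^2$ with colors red and blue and no monochromatic congruent copy of $\ell_3$. Two elementary observations govern the argument. The \emph{midpoint rule}: if two points of the same color lie at distance $2$, their midpoint has the opposite color, for otherwise those two points and the midpoint form a monochromatic $\ell_3$. The \emph{extension rule}: if $p$ and $q$ have the same color and $|p-q|=1$, then $2p-q$ and $2q-p$ both have the opposite color, for otherwise $\{2p-q,p,q\}$ or $\{p,q,2q-p\}$ is a monochromatic $\ell_3$. The starting point is that a monochromatic unit segment must exist: the unit-distance graph of $\E^2$ contains triangles, and a two-coloring with no monochromatic unit segment would properly two-color such a triangle. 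After a rigid motion, assume $O=(0,0)$ and $A=(1,0)$ are red; then the extension rule applied to $\{O,A\}$ makes $(-1,0)$ and $(2,0)$ blue.

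Next I would split on the color of the apex $C=(\tfrac12,\tfrac{\sqrt3}{2})$ of the unit equilateral triangle on $OA$. If $C$ is red, then applying the extension rule to the red unit segments $\{O,C\}$ and $\{A,C\}$ makes $(-\tfrac12,-\tfrac{\sqrt3}{2})$, $(\tfrac32,-\tfrac{\sqrt3}{2})$, and $(0,\sqrt3)$ blue; since $(-\tfrac12,-\tfrac{\sqrt3}{2})$ and $(\tfrac32,-\tfrac{\sqrt3}{2})$ are at distance $2$, the midpoint rule forces $(\tfrac12,-\tfrac{\sqrt3}{2})$ to be red; the extension rule applied to the red unit segment $\{O,(\tfrac12,-\tfrac{\sqrt3}{2})\}$ then makes $(-\tfrac12,\tfrac{\sqrt3}{2})$ blue; but now $(-1,0)$, $(-\tfrac12,\tfrac{\sqrt3}{2})$, $(0,\sqrt3)$ are three blue, collinear, unit-spaced points — a monochromatic $\ell_3$, contradiction. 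Reflecting this across the line $OA$ yields the same contradiction if the opposite apex $(\tfrac12,-\tfrac{\sqrt3}{2})$ is red. Hence, in the surviving situation, both equilateral apexes of every monochromatic unit segment take the opposite color; around $OA$ the coloring is then forced to agree with the triangular-lattice coloring ($O,A$ red, their outward extensions blue, both apexes blue).

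The remaining case — a monochromatic unit segment whose two equilateral apexes have the opposite color — is where I expect the bulk of the work to lie. The plan there is to continue propagating: adjoin further auxiliary points (equilateral apexes of newly-colored unit segments, the two points on the perpendicular bisector of $OA$ at distance $2$ from $O$ and from $A$, and the images of all of these under the reflections in $OA$ and in that bisector), branch into finitely many subcases according to their colors, and in each subcase iterate the midpoint and extension rules until three unit-spaced collinear points are forced into a single color. The genuine obstacle is that this cannot be reduced to a finite search on one fixed lattice: a coloring with no monochromatic three-term progression need not be periodic — indeed the triangular lattice admits a two-coloring containing no monochromatic $\ell_3$ among its points — so the contradiction must instead be extracted by combining forced colors living at mutually incommensurable distances and directions, and engineering the auxiliary configuration so that the resulting tree of subcases actually terminates is the crux of the argument.
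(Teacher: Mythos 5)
Your preliminary steps are fine: the midpoint and extension rules are valid, the existence of a monochromatic unit segment is correct, and your hand computation ruling out a monochromatic unit equilateral triangle checks out (it recovers, for side length $1$, what the paper gets by citing Theorem~\ref{thm:Erdos}). But everything after that is a plan rather than a proof, and the plan omits exactly the content that makes the theorem true. You yourself identify the obstruction --- the triangular lattice generated by $O$, $A$ and the apexes admits a two-coloring with no monochromatic $\ell_3$, so no amount of propagation of the midpoint/extension rules inside that lattice can terminate in a contradiction --- but you give no concrete auxiliary configuration off the lattice, no specification of the finitely many subcases, and no argument that the case tree closes. ``Engineering the auxiliary configuration so that the tree terminates'' is not a detail to be filled in later; it is the theorem.

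For comparison, the paper's proof has three nontrivial ingredients, none of which appear in your proposal. First, a centroid lemma (Lemma~\ref{lem:computationallemma}): in any coloring with no monochromatic $\ell_3$ (hence, via Theorem~\ref{thm:Erdos}, no monochromatic equilateral triangle of side $1$ or $2$), every red-blue-blue unit equilateral triangle has a blue centroid; this is established by exhibiting an explicit finite point set (56 points, or a hand-checkable case analysis) with coordinates of the form $\bigl(\tfrac{a\sqrt{3}+b\sqrt{11}}{12},\tfrac{c+d\sqrt{33}}{12}\bigr)$ --- the crucial points live off the triangular lattice, at distances such as $\tfrac{1}{\sqrt3}$, which is precisely how the lattice obstruction you noticed is circumvented. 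Second, a rigidity lemma (Lemma~\ref{lem:fixedgrid}): using the centroid lemma repeatedly, the coloring of the $\tfrac{1}{\sqrt3}$-scaled hexagonal grid is forced, up to isometry. Third, a global rotation argument: in the forced grid coloring every grid point at distance $\tfrac{4}{\sqrt3}$ from a red point is red, so rotating the grid about a red point and about a nearby blue point produces an entirely red circle and an entirely blue circle that intersect --- the contradiction comes from this continuum of rotated grids, not from forcing three unit-spaced collinear points of one color inside a single finite configuration, which is what your sketch aims for. As it stands, your proposal proves only the easy preliminary case and leaves the substance of the theorem unproved.
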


Thus, by scaling, there naturally exists a monochromatic $3$-term arithmetic progression with any common difference. The classical question in this area, known as the Hadwiger-Nelson (HN) problem, is one of the most famous open problems in combinatorics. The HN problem, first discussed by Nelson (not in print) in $1950$, asks how many colors one would need to color $\E^2$ so that there is no monochromatic copy of $\ell_2$; i.e. two points of unit distance apart. This quantity is known sometimes as the \emph{chromatic number} $\chi(\E^2)$ of the plane. It was known that the answer is between $4$ and $7$ for a long time, and a $2018$ breakthrough by de Grey \cite{dG18} showed that one needs at least $5$ colors. In general, it is known that $(1.239+o(1))^n \leq \chi(\E^n)\leq (3+o(1))^n$ as $n \to \infty$ \cite[Section 11.1]{GOT18}.

After the introduction of the HN problem, the area was further developed by Erd{\H{o}}s, Graham, Montgomery, Rothschild, Spencer, and Straus in a series of papers \cite{E73, E75a, E75b}. In these papers, they ask if, for any non-equilateral three-point configuration $K$, there must be a monochromatic congruent copy of $K$ in any $2$-coloring of $\E^2$. The conjecture was confirmed when the coloring is assumed to be polygonal \cite{JKSV09}, but it is still widely open in general. As noted in \cite[Section 6.3]{BMP}, Theorem \ref{thm:ell3} gives perhaps the most natural open case of this conjecture. This question was raised again in the concluding remarks of a very recent paper of F\"uhrer and T\'oth \cite[Page 12]{FT24}.

To discuss further known results, we introduce some standard notation. If we have configurations $K_1,\dots,K_r$ in $\E^n$, we say that $\E^n \rightarrow (K_1,\dots,K_r)$ if, for any coloring of $\E^n$ with $r$ colors, there exists a monochromatic (congruent) copy of $K_i$ in color $i$, for some $i$. If there exists a coloring where this does not hold, we say $\E^n \not \to (K_1,\dots,K_r)$. For simplicity, if $K_i = K$ for all $i$ and $\E^n \rightarrow (K_1,\dots,K_r)$ or $\E^n \not \rightarrow (K_1,\dots,K_r)$, we say simply $\E^n \xrightarrow[]{r} K$ (resp. $\E^n \not \xrightarrow[]{r} K$). 

Using the above terminology, our Theorem \ref{thm:ell3} says that $\E^2 \xrightarrow[]{2} \ell_3$. The question of for which $n,r,s_1,\dots,s_r$ we have $\E^n \xrightarrow[]{} (\ell_{s_1},\dots,\ell_{s_r})$ also has a rich history, so we collect here the known results. The two most relevant results are $\E^2 \not \xrightarrow[]{3} \ell_3$ and $\E^3 \xrightarrow[]{2} \ell_3$. The first of these was shown by Graham and Tressler \cite{GT11} using a simple hexagonal grid construction. On the other hand, Erd{\H{o}}s et. al. \cite[Theorem 8]{E73} proved that $\E^3 \xrightarrow[]{2} T$ for any triangle\footnote{Throughout, degenerate triangles (that is, three collinear points) are also regarded as triangles.} $T$; in particular, $\E^3 \xrightarrow[]{2} \ell_3$. The following are other relevant results in the area:

\begin{itemize}
    \item $\E^2 \to (\ell_2, K)$ for any $K$ with $4$ points (Juh\'{a}sz \cite{J79})
    \item $\E^2 \to (\ell_2, \ell_5)$ (Tsaturian \cite{T17})
    \item There is a set $K$ with $8$ points, such that $\E^2 \not \to (\ell_2, K)$ (Csizmadia and T\'{o}th \cite{CT94})
    \item $\E^3 \to (\ell_2, \ell_6)$ (Arman and Tsaturian \cite{AT18})
    \item $\E^n \not \to (\ell_2, \ell_{2^{cn}})$ for some constant $c>0$ (Conlon and Fox \cite{CF19})
    \item $\E^n \not \to (\ell_3, \ell_{10^{50}})$ (Conlon and Wu \cite{CW23})
    \item Very recently, $\E^n \not \to (\ell_3, \ell_{1177})$ (F\"uhrer and T\'oth \cite{FT24}) 
    \item $\E^n \not \xrightarrow[]{2} \ell_6$ (Erd{\H{o}}s et. al. \cite[Theorem 12]{E73})
\end{itemize}

An \emph{$(a,b,c)$ triangle} is a triangle with side lengths $a,b,c$. The following theorem is due to Erd{\H{o}}s et. al. \cite[Theorem 1]{E75b}. 
\begin{thm}[Erd{\H{o}}s et. al.]\label{thm:Erdos}
Let $n \geq 2$. A given $2$-coloring of $\E^n$ admits a monochromatic $(a,b,c)$ triangle if and only if it admits a monochromatic equilateral triangle of side $a$, $b$, or $c$.     
\end{thm}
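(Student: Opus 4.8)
The plan is to prove the two implications separately, after two reductions. First, if $a=b=c$ the assertion is a tautology, so we may assume the $(a,b,c)$ triangle is non-equilateral and, relabeling, that it has a side of length $a$. Second, since a $2$-dimensional affine subspace of $\E^n$ inherits the coloring, it is enough to treat $n=2$. The implication carrying the real content is ``only if'': a monochromatic $(a,b,c)$ triangle forces a monochromatic equilateral triangle of side $a$, $b$, or $c$. (Applied to the triangle with side lengths $1,1,2$, this is precisely what reduces Theorem~\ref{thm:ell3} to a question about equilateral triangles.)

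For ``only if'' I would argue by contraposition. Assume the coloring has no monochromatic equilateral triangle of side $a$, of side $b$, or of side $c$, and suppose toward a contradiction that there is a monochromatic---say red---$(a,b,c)$ triangle $PQR$ with $|QR|=a$, $|RP|=b$, $|PQ|=c$. Each of the two points completing $QR$ to an equilateral triangle must be blue (else a red equilateral triangle of side $a$); likewise the two points completing $RP$ are blue and the two completing $PQ$ are blue. Writing $\rho_X$ for the rotation by $60^{\circ}$ about $X$, one completion of $RP$ is $\rho_P(R)$ and one of $PQ$ is $\rho_P(Q)$, so $|\rho_P(R)-\rho_P(Q)|=|R-Q|=a$: we have produced a blue pair at distance $a$, and symmetrically blue pairs at distances $b$ and $c$. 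Completing those pairs to equilateral triangles forces new points to be red; iterating propagates forced colors along the orbit of $\{P,Q,R\}$ under the group generated by $\rho_P,\rho_Q,\rho_R$, the color alternating at each step. The heart of the proof is to show that this forced coloring cannot be consistently extended---or, where it can, that the resulting pattern already contains a monochromatic equilateral triangle of side $a$, $b$, or $c$. When $(a,b,c)=(1,1,2)$ (that is, $\ell_3$) this collapses immediately: $P,Q,R$ are collinear, and the three completions lying on one side of the line are already the vertices of an equilateral triangle of side $1$, all forced blue, a contradiction. For a genuine triangle the first six completions do not by themselves form such a triangle, so the propagation must be pushed further, and it is essential to use all three hypotheses together: forbidding monochromatic equilateral triangles of a single side length does not pin down enough structure.

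For ``if'', suppose there is a monochromatic---say red---equilateral triangle $XYZ$ of side $a$. On each of its edges, all of length $a$, place a congruent copy of the $(a,b,c)$ triangle with its length-$a$ side on that edge; this gives four choices of third vertex per edge. If any of these twelve points is red it forms a red $(a,b,c)$ triangle with the two red endpoints and we are done; otherwise all twelve are blue, and the $3$-fold rotational symmetry of $XYZ$ arranges them into blue equilateral triangles (orbits of size three). I would then show that this configuration---augmented if necessary by one further round of the construction---must contain a monochromatic $(a,b,c)$ triangle; the delicate point, as before, is to prevent the process from regressing indefinitely through triangles of varying side lengths.

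In both directions the obstacle is the same: converting the local forced-color rule into a finite, terminating argument. I expect the crux to be the construction of a bounded configuration of points---a handful of equilateral and $(a,b,c)$ triangles glued along shared edges---together with a finite case analysis showing that every $2$-coloring of it containing a monochromatic $(a,b,c)$ triangle also contains a monochromatic equilateral triangle of side $a$, $b$, or $c$ (with a companion configuration for the reverse implication). The freedom to orient these glued triangles arbitrarily in the plane---and a fortiori in $\E^n$ for $n\ge 2$---is what should make such a configuration available; isolating the right one is where the real work lies.
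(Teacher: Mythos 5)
This statement is not proved in the paper at all: Theorem~\ref{thm:Erdos} is quoted from Erd\H{o}s, Graham, Montgomery, Rothschild, Spencer and Straus \cite[Theorem 1]{E75b} and used as a black box, so there is no in-paper argument to compare yours with. Judged on its own terms, your proposal is a plan rather than a proof, and the gap is exactly where you say ``the real work lies'': in both implications the central step is deferred to an unspecified finite configuration and case analysis that is never exhibited or verified. In the ``only if'' direction, the statement ``show that this forced coloring cannot be consistently extended --- or, where it can, that the resulting pattern already contains a monochromatic equilateral triangle of side $a$, $b$, or $c$'' is a restatement of the theorem, not a step toward it; you give no propagation rule that provably terminates and no bounded point set on which a contradiction is reached. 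The one piece you do complete is the degenerate case $(a,b,c)=(1,1,2)$, where your observation is correct: the three same-side apexes over the segments $QR$, $RP$, $PQ$ of a monochromatic $\ell_3$ form a unit equilateral triangle, forcing a monochromatic equilateral triangle of side $1$ or $2$. But that special case is not what the paper needs from Theorem~\ref{thm:Erdos} (Section~\ref{sec:proof} uses the converse direction: a monochromatic equilateral triangle of side $1$ or $2$ must yield a monochromatic $\ell_3$), and it does not generalize as stated.

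The ``if'' direction has a concrete flaw in addition to being unfinished. If the twelve apex points over the edges of a red equilateral triangle of side $a$ are all blue, the orbits of size three under the $120^\circ$ rotation are indeed equilateral triangles, but their side lengths are determined by the apexes' distances to the centroid and are in general \emph{not} $a$, $b$, or $c$; more importantly, what you must produce at that stage is a blue $(a,b,c)$ triangle, and nothing in the sketch manufactures one, nor is it clear that ``one further round of the construction'' suffices for arbitrary $(a,b,c)$. So both implications still lack their key lemma. If you want a complete argument, either reproduce the proof from \cite{E75b} or genuinely carry out the finite-configuration analysis you allude to (in the spirit of the paper's Lemma~\ref{lem:computationallemma}, which does exhibit and check an explicit point set); as written, the proposal asserts that such a configuration should exist but does not supply it.
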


Using Theorem~\ref{thm:Erdos} along with Theorem~\ref{thm:ell3}, we get the following corollary. 
\begin{cor}
If $n \geq 2$, then $\E^n \xrightarrow[]{2} T$ for an $(\alpha, 2\alpha, x\alpha)$ triangle $T$ for any $\alpha>0$ and $x\in [1,3]$.
\end{cor}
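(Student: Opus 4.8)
The plan is to feed Theorem~\ref{thm:ell3} into Theorem~\ref{thm:Erdos} twice. First I would fix an arbitrary $2$-coloring of $\E^n$ with $n\ge 2$ and restrict it to any two-dimensional affine subspace $\Pi\cong\E^2$. Up to the homothety $v\mapsto\alpha v$, the configuration $\ell_3$ is exactly the (degenerate) triangle with side lengths $\alpha,\alpha,2\alpha$; hence Theorem~\ref{thm:ell3}, applied in $\Pi$ and then rescaled, produces a monochromatic congruent copy of the $(\alpha,\alpha,2\alpha)$ triangle inside $\Pi$, and so inside $\E^n$.

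Next I would apply Theorem~\ref{thm:Erdos} to the triple $(a,b,c)=(\alpha,\alpha,2\alpha)$: since the coloring admits a monochromatic $(\alpha,\alpha,2\alpha)$ triangle, it admits a monochromatic equilateral triangle of side $\alpha$ or side $2\alpha$. The key observation is that for every $x\in[1,3]$ the lengths $\alpha,2\alpha,x\alpha$ satisfy the (non-strict) triangle inequality, so $(\alpha,2\alpha,x\alpha)$ is a legitimate (possibly degenerate) triangle, and both $\alpha$ and $2\alpha$ lie in $\{\alpha,2\alpha,x\alpha\}$. Thus the monochromatic equilateral triangle just obtained has side length in $\{\alpha,2\alpha,x\alpha\}$, and applying Theorem~\ref{thm:Erdos} once more---this time in the reverse direction and to the triple $(\alpha,2\alpha,x\alpha)$---yields a monochromatic congruent copy of $T$. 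As the coloring was arbitrary, $\E^n\xrightarrow{2}T$.

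There is essentially no obstacle beyond bookkeeping; this is a short two-step deduction once Theorems~\ref{thm:ell3} and~\ref{thm:Erdos} are available. The only points worth a moment's care are: (i) the hypothesis $x\in[1,3]$ is exactly the condition under which $(\alpha,2\alpha,x\alpha)$ is a genuine triangle, which is what licenses invoking Theorem~\ref{thm:Erdos} for that triple (and is why the statement is restricted to this range); and (ii) the argument must begin by passing to a plane $\Pi$ because Theorem~\ref{thm:ell3} is stated only for $\E^2$, whereas Theorem~\ref{thm:Erdos} already holds for all $n\ge 2$, so no further attention to the dimension is needed.
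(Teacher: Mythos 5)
Your argument is correct and is essentially the same as the paper's: rescale $\ell_3$ to get a monochromatic $(\alpha,\alpha,2\alpha)$ triangle, use one direction of Theorem~\ref{thm:Erdos} to obtain a monochromatic equilateral triangle of side $\alpha$ or $2\alpha$, then use the other direction applied to the $(\alpha,2\alpha,x\alpha)$ triangle. Your explicit remarks on restricting to a plane and on $x\in[1,3]$ ensuring the (degenerate) triangle inequality are just the bookkeeping the paper leaves implicit.
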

\begin{proof}
Note that $\ell_3$ is a $(1,1,2)$ triangle. Thus, for any $\alpha>0$, Theorem~\ref{thm:ell3} implies any $2$-coloring of $\E^n$ admits a monochromatic $(\alpha,\alpha,2\alpha)$ triangle by scaling. Theorem~\ref{thm:Erdos} then implies that any $2$-coloring of $\E^n$ admits a monochromatic equilateral triangle of side either $\alpha$ or $2\alpha$. In both cases, the corollary follows from Theorem~\ref{thm:Erdos}.
\end{proof}

This verifies another interesting case of the aforementioned conjecture of Erd\H os et. al. from \cite{E75b}. We refer to \cite{E75b, S76} and \cite[Theorem 11.1.4 (a)]{GOT18} for a collection of known families of triangles $T$ such that $\E^2 \xrightarrow[]{2} T$. In particular,  Erd{\H{o}}s et. al. \cite{E75b} showed that $\E^2 \xrightarrow[]{2} T$ if $T$ has a ratio between two sides equal to $2\sin (\theta/2)$ with $\theta \in \{30^\circ, 72^\circ, 90^\circ, 120^\circ \}$. Our result handles the case that $\theta=180^\circ$.

Erd{\H{o}}s et. al. \cite[Theorem 3]{E75a} showed that $\E^2 \to (K,K')$ for any configuration $K,K'$ of size $2$ and $3$, respectively. Szlam \cite{S01} strengthened this result and showed that every red-blue coloring of the plane in which no two red points are at distance 1 contains at least one blue \emph{translate} of every three-point configuration. In fact, he showed a much stronger result; see \cite[Theorem 1.3]{CF19}. Theorem~\ref{thm:ell3} implies the following, which can be viewed as an analogue of Szlam's theorem.

\begin{cor}
Every red-blue coloring of $\E^n$ contains either a red copy of $\ell_3$, or a blue translate of every 2-point configuration. 
\end{cor}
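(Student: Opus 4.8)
The plan is to argue by contradiction and reduce everything to Theorem~\ref{thm:ell3}. I would assume the given red-blue coloring of $\E^n$ (with $n\ge 2$) has no red copy of $\ell_3$; write $R$ and $B$ for the sets of red and blue points. Fix an arbitrary $2$-point configuration; up to translation it has the form $\{0,w\}$ for some $w\neq 0$, and a blue translate of it is precisely a pair $\{x,x+w\}$ with $x,x+w\in B$. I would then suppose, for contradiction, that no such pair exists.

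The first step is the observation that this assumption forces $B+w\subseteq R$: if $y\in B$ then $y+w\notin B$, since otherwise $\{y,y+w\}$ would be a blue translate of $\{0,w\}$; hence $y+w\in R$. The second step is to invoke Theorem~\ref{thm:ell3}: restricting the coloring to any $2$-dimensional affine subspace of $\E^n$ and applying the theorem yields a monochromatic congruent copy of $\ell_3$ in $\E^n$ (so in particular $\E^n\xrightarrow[]{2}\ell_3$). Since there is no red copy by assumption, this must be a blue progression $\{q,\,q+u,\,q+2u\}$ with $|u|=1$. Finally, I would translate this progression by the vector $w$: using $B+w\subseteq R$ from the first step, the three points $q+w,\ q+u+w,\ q+2u+w$ all lie in $R$, and $\{q+w,\,q+u+w,\,q+2u+w\}$ is again a congruent copy of $\ell_3$ (same common difference $u$), now entirely red. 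This contradicts the standing assumption, so a blue translate of every $2$-point configuration must exist.

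I do not anticipate a genuine obstacle here; the whole content is the single remark that the absence of a blue $w$-translate makes the red set absorb $B+w$, after which one simply pushes the blue copy of $\ell_3$ supplied by Theorem~\ref{thm:ell3} along $w$ into the red set. The only routine points are normalizing the configuration to $\{0,w\}$ and noting that the planar statement $\E^2\xrightarrow[]{2}\ell_3$ upgrades to $\E^n\xrightarrow[]{2}\ell_3$ by passing to a $2$-plane.
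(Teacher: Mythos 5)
Your proof is correct and follows essentially the same route as the paper: apply $\E^n\to(\ell_3,\ell_3)$ and translate the resulting monochromatic $\ell_3$ by the configuration's difference vector. Normalizing the configuration to $\{0,w\}$ makes the paper's auxiliary green/yellow coloring coincide with the original red/blue coloring, so your version is a slightly streamlined form of the same argument.
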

\begin{proof}
We proceed similarly as in Szlam \cite{S01} with the help of Theorem~\ref{thm:ell3}. Suppose that there is a red/blue coloring of $\mathbb{E}^n$ with no blue translate of some $K=\{a,b\} \subset \mathbb{E}^n$. It follows that for each $p \in \mathbb{E}^n$, $p+a$ or $p+b$ is red.
Now we color the points of $\mathbb{E}^n$ in green and yellow using the following rule: a point $p$ is colored green if $p+a$ is red, and otherwise $p$ is colored yellow. Since $\E^n \to (\ell_3, \ell_3)$, there are 3 collinear points $v_1, v_2, v_3$ with distance one apart that are all green or all yellow. If $v_1,v_2,v_3$ are all green, then $v_1+a, v_2+a, v_3+a$ are all red; if $v_1,v_2,v_3$ are all yellow, then $v_1+b, v_2+b, v_3+b$ are all red. In both cases, we get a red copy of $\ell_3$ and we are done.
\end{proof}

\section{Proof of theorem \ref{thm:ell3}}
\label{sec:proof}
If we assume a $2$-coloring contains no monochromatic $\ell_3$, then the aforementioned result of Erd{\H{o}}s et. al. (Theorem~\ref{thm:Erdos}) implies that it contains no monochromatic equilateral triangles of side-length $1$ or $2$ as well. Thus, it will be useful to construct a point set that contains many $\ell_3$'s, as well as equilateral triangles of side-length $1$ and $2$. The points we use here are all in the following coordinates:
\begin{equation}
\label{eq:theform}
     [a,b,c,d] := \left(\frac{a\sqrt{3} + b\sqrt{11}}{12}, \frac{c + d\sqrt{33}}{12}\right),
\end{equation}
where $a,b,c,d$ are integers. Distances between such points can be checked with integers only, preventing decimal uncertainty in our computations. We will provide the point set coordinates in Appendix \ref{app:pointsets}, and online here \cite{Resources3AP}. It is worth noting that points of the form in \eqref{eq:theform} are very similar to those used to prove that the chromatic number of $\E^2$ is at least $5$ in \cite{dG18} and \cite{EI20}. 

Our proof will proceed in two steps. We will first show that in any two-coloring of $\E^2$ avoiding monochromatic $\ell_3$'s, any unit equilateral triangle colored blue-blue-red will have a blue centroid (and symmetrically, a red-red-blue unit equilateral triangle will have a red centroid). We will use this to color all points in the triangular grid, and use that coloring to derive a contradiction. We proceed now with the first part of proof.

\begin{lem}
\label{lem:computationallemma}
    If $\E^2$ is two-colored without a monochromatic $\ell_3$, any unit equilateral triangle colored red-blue-blue has a blue centroid.
\end{lem}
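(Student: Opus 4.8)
The plan is to argue by contradiction. Assume $\E^2$ is two-colored with no monochromatic $\ell_3$, and suppose some unit equilateral triangle $\triangle ABC$ has $A$ red, $B$ and $C$ blue, but centroid $G$ red. Since $\ell_3$ is precisely the degenerate $(1,1,2)$ triangle, Theorem~\ref{thm:Erdos} applied with $(a,b,c)=(1,1,2)$ tells us the coloring also has no monochromatic equilateral triangle of side $1$ and none of side $2$. These three prohibitions --- no monochromatic unit-spaced $\ell_3$, no monochromatic unit equilateral triangle, no monochromatic equilateral triangle of side $2$ --- are the only properties of the coloring I will use.

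The basic engine is local propagation: in a two-coloring, whenever two points of a would-be forbidden configuration share a color, the third is pinned to the opposite color. Concretely, if $P$ and $Q$ have the same color and $|PQ|=1$, then the two points completing $\overline{PQ}$ to a unit-spaced collinear triple and the two apexes of the unit equilateral triangles on $PQ$ are all forced to the opposite color; if $|PQ|=2$, the midpoint of $PQ$ and the two apexes of the side-$2$ equilateral triangles on $PQ$ are forced to the opposite color. From the starting data the only immediate deduction is that the reflection $A'$ of $A$ across line $BC$ is red, since it is the second apex of a unit equilateral triangle on the blue segment $BC$ and $A$ is already red; the remaining distances among $A,B,C,G,A'$ (for instance $|AG|=1/\sqrt3$ and $|AA'|=\sqrt3$) are not equal to $1$ or $2$, so nothing else is triggered on its own. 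The argument must therefore introduce many auxiliary points at unit and double-unit distances from the points already colored and let the propagation cascade.

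I would then fix an explicit finite point set $S\subset\E^2$ containing $A,B,C,G$ (and hence $A'$), with every point of $S$ written in the coordinates of \eqref{eq:theform}; this guarantees that every squared distance between points of $S$ lies in $\tfrac{1}{144}\bigl(\Z+\Z\sqrt{33}\bigr)$, so that the relations ``$|PQ|=1$'', ``$|PQ|=2$'', and equalities of distances can be certified by integer arithmetic alone, with no floating-point risk. One then verifies by exhaustive search that no two-coloring of $S$ can simultaneously (a) color $\triangle ABC$ red--blue--blue with $G$ red, (b) contain no monochromatic unit-spaced $\ell_3$ among points of $S$, and (c) contain no monochromatic unit or side-$2$ equilateral triangle among points of $S$. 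The explicit $S$ is provided in Appendix~\ref{app:pointsets} and the verification at \cite{Resources3AP}.

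The crux is the design of $S$: it must pack in enough unit and side-$2$ distances and equilateral triangles that the constraints genuinely over-determine the coloring once $G$ is declared red, while staying small enough that the $2^{|S|}$ search (pruned by forced-color propagation) is feasible, and all of its points must be simultaneously realizable within the rigid family \eqref{eq:theform} --- which is exactly what makes the distance bookkeeping exact. Once a suitable $S$ is in hand, the proof can either invoke the search directly or, preferably, exhibit a short certificate: a chain of forced colorings, each one an instance of the propagation rules above, terminating in a monochromatic $\ell_3$ or a monochromatic equilateral triangle of side $1$ or $2$, possibly after a small case split on the few points the chain does not pin down.
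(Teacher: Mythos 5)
Your proposal follows essentially the same route as the paper's own proof: reduce via Theorem~\ref{thm:Erdos} to forbidding monochromatic equilateral triangles of side $1$ and $2$ in addition to $\ell_3$, then refute the bad initial coloring (red--blue--blue triangle with red centroid) on an explicit finite point set written in the coordinates \eqref{eq:theform}, using forced-color propagation and exhaustive search --- exactly the paper's 56-point configuration and its hand-checkable chains of implications. The outline is correct; the one ingredient you leave as a promise, namely the concrete point set and the certificate that propagation terminates in a contradiction, is precisely what the paper supplies in Appendix~\ref{app:pointsets} and \cite{Resources3AP}.
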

\begin{proof}
    We will give two proofs of the lemma, one of which is easily verified computationally, and the other of which can be done by hand. For the first approach, we use the point-set depicted in Figure \ref{fig:lemma1graph}, where a unit-length triangle is colored in opposition to the claim of the lemma. 
    
    It can then be checked that there is no way to complete the coloring of the points without creating a monochromatic copy of $\ell_3$ or equilateral triangle of side-length $1$ or $2$. To be certain, we used two different computational methods to verify this; a basic depth-first-search coloring algorithm, and integer programming. The code for both of these can be found here \cite{Resources3AP}.
    \begin{figure}[H]
        \centering
        \includegraphics[scale = 0.45]{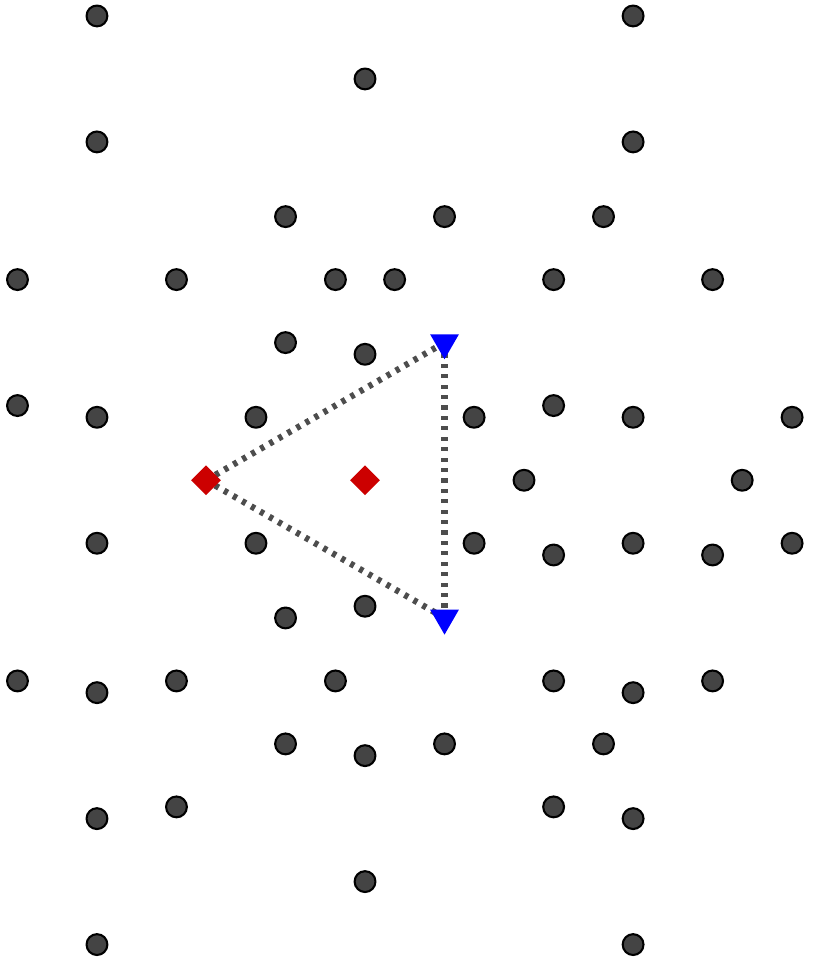}
        \caption{A set of 56 points admitting no coloring compatible with the initial coloring of the unit triangle and its center}
        \label{fig:lemma1graph}
    \end{figure}

   Next, we describe our second argument which can be verified by hand. For this argument, we will use the points described in Figure \ref{fig:lemma1cases}, which have the following coordinates: 
   \begin{equation*}
   \begin{matrix}
    p_1 = [-4,0,0,0], & p_2 = [0,0,0,0], & p_3 = [2,0,-6,0], & p_4 = [2,0,6,0],
    \\ 
    q_1 = [-1,-3,3,-1], & q_2 = [-1,-3, -3, 1], & q_3 = [2,0,0,2], & q_3' = [2,0,0,-2],
    \\
    q_4 = [-3,-3,-3,-1], & q_5 = [-3,-3,3,1]. & &
   \end{matrix}
   \end{equation*}
   We will break into several cases depending on the colors of the points $\set{q_1,...\, ,q_5}$.
    \begin{figure}[H]
        \centering
        \includegraphics[scale = 0.58]{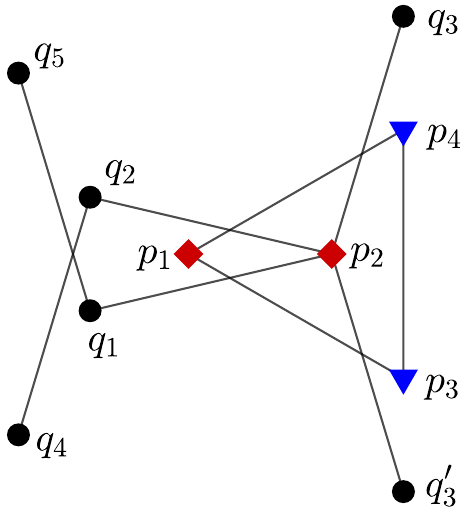}
        \caption{The base points needed to verify the lemma}
        \label{fig:lemma1cases}
    \end{figure}
    The constructions follow by iteratively choosing a select pair of points with the same color that are distance $1$ or $2$ apart, and performing one of the following steps:
    \begin{itemize}
        \item adding a third point to form an $\ell_3$ (as a midpoint if the points are distance $2$ apart), or
        \item adding a third point to create an equilateral triangle,
    \end{itemize} 
    and coloring the new point the opposite color. After a number of steps, we encounter a point that cannot be colored either red or blue. 
   \begin{figure}[H]
        \centering
        \includegraphics[scale = 0.55]{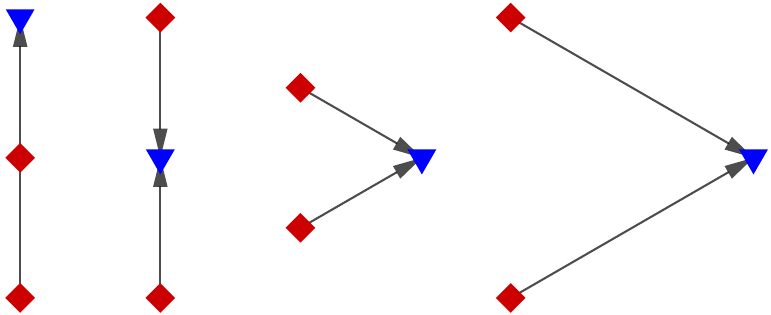}
        \caption{The color implication steps}
        \label{fig:basicmoves}
    \end{figure}
    Because all cases are identical in concept, we treat only the case when $q_1$ is red and $q_2$ is blue here, and relegate the remaining cases to Appendix \ref{app:lemmacases}. In the following graph, we have added 18 points to Figure \ref{fig:lemma1cases}, labeled $\set{s_1,...\, , s_{18}}$, and found that $s_{18}$ cannot be colored red or blue. To verify the claim in this case, one should visit each vertex $s_k$ and observe that its color is implied by vertices in $\set{p_1,...\, ,p_4}$, $\set{q_1,...\, ,q_5,q_3'}$, and $\set{s_1,...\, ,s_{k-1}}$. Note that $p_1$ is not used in this particular case.
    \end{proof}
    \begin{figure}[H]
        \centering
        \includegraphics[scale = 0.54]{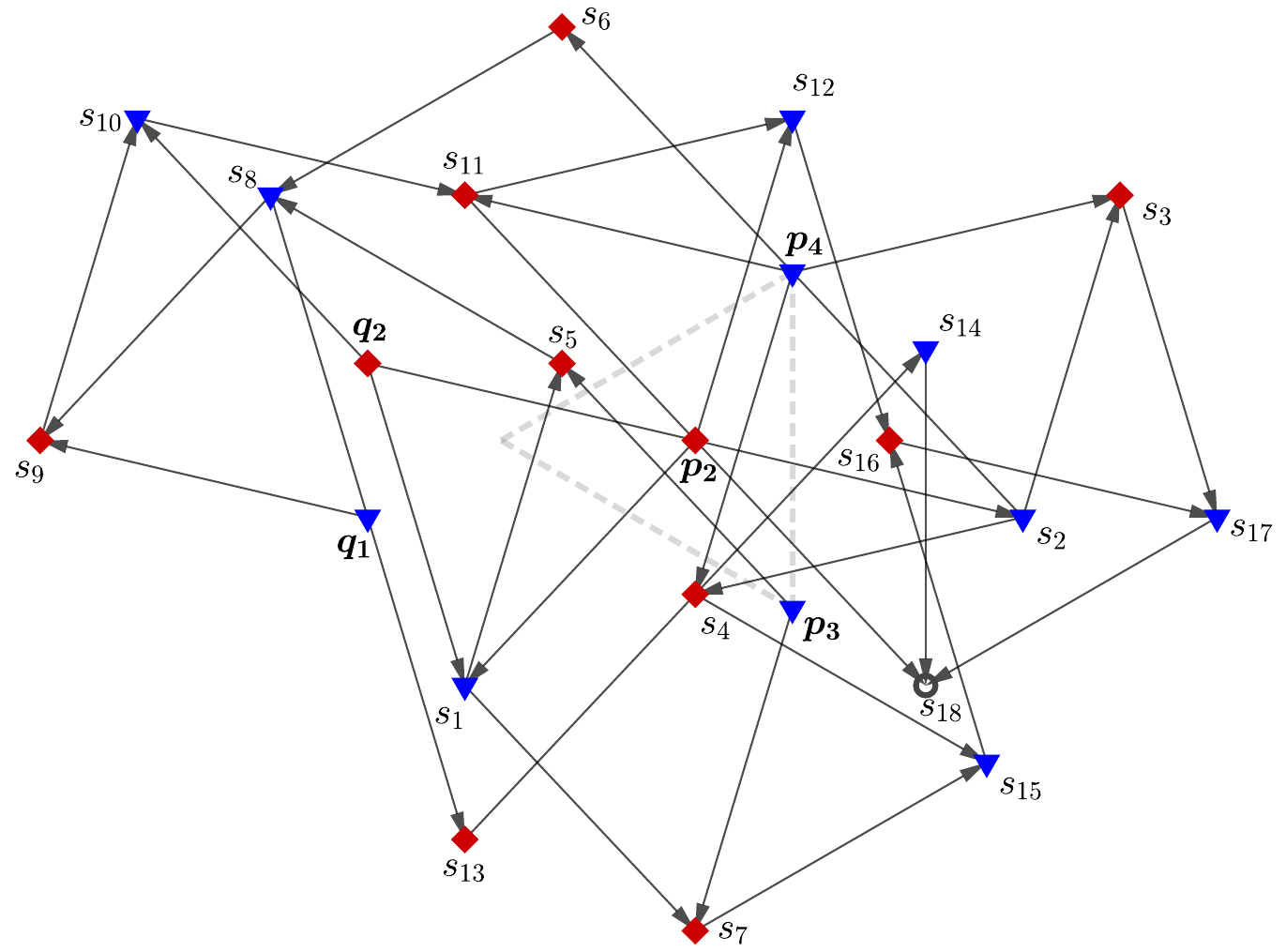}
        \caption{Case 1: $q_1$ is red and $q_2$ is blue (or vice versa)}
        \label{fig:lemma1case1}
    \end{figure}
Now, we can proceed with the second part of the proof of Theorem \ref{thm:ell3}.
\begin{lem}
\label{lem:fixedgrid}
    If $\E^2$ is two-colored without a monochromatic $\ell_3$, a $\frac{1}{\sqrt{3}}$ scaled hexagonal grid has only one valid coloring up to isometry.
\end{lem}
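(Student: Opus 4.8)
The plan is to exploit the fact that the $\frac{1}{\sqrt{3}}$-scaled hexagonal grid $L$ decomposes as a disjoint union $L = \Lambda \sqcup C_+ \sqcup C_-$ of three translates of the unit triangular lattice $\Lambda$, where $\Lambda$ is the rotated, $\sqrt{3}$-scaled sublattice of $L$ and $C_+, C_-$ are its two nontrivial cosets, which consist precisely of the centroids of the upward and downward triangular faces of $\Lambda$. First I would record the elementary lattice facts that (i) every two points of $L$ at distance $1$ lie in a single one of $\Lambda, C_+, C_-$, so that every copy of $\ell_3$ and every equilateral triangle of side $1$ or $2$ with all vertices in $L$ lies in one of these three sublattices, and (ii) each point of $L$ lies at the centroid of exactly one upward and exactly one downward unit face of $L$, and those two faces belong to the two sublattices other than the one containing the point. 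Since a $2$-coloring of $\E^2$ with no monochromatic $\ell_3$ also has no monochromatic equilateral triangle of side $1$ or $2$ (Theorem~\ref{thm:Erdos}), Lemma~\ref{lem:computationallemma}, together with its color-reversed version, then says that in any such coloring the color of each point of $L$ equals the majority color of every unit face of which it is the centroid. In particular, the coloring of any one of $\Lambda, C_+, C_-$ determines the colorings of the other two.

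With this reduction in hand, I would pick a unit face $\Delta$ of $L$ and call $\Lambda$ the coset containing it. As $\Delta$ is not monochromatic, after a plane isometry (and, if needed, interchanging the two colors) we may assume its vertices carry a fixed color pattern, say red–blue–blue in a fixed cyclic order; Lemma~\ref{lem:computationallemma} then forces its centroid blue. From this seed I would propagate the coloring over all of $L$ using the color-implication steps already used for Lemma~\ref{lem:computationallemma}: two same-colored points at distance $1$ or $2$ on a common line force the third point of the resulting $\ell_3$ to the opposite color; two same-colored vertices of a unit face force the remaining vertex to the opposite color; and the centroid–majority rule links the colorings of the three sublattices. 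One checks that iterating these deductions pins down the color of every point of $L$; because $L$ and all the relevant configurations are invariant under a finite-index translation sublattice, this reduces to a bounded computation, which may be carried out by hand through explicit implication chains (as in Appendix~\ref{app:lemmacases}) or by the depth-first search used for Lemma~\ref{lem:computationallemma}. Consistency of the resulting coloring requires no separate verification: we are restricting an actual $2$-coloring of $\E^2$, so we only need that its values on $L$ are forced; and the residual ambiguity from the initial color swap is resolved by observing that the two resulting colorings differ by interchanging colors composed with a point reflection, hence agree up to isometry.

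The main obstacle is that no single one of these local rules propagates on its own — a unit-face edge whose endpoints have different colors imposes nothing on the third vertex, so a naive face-by-face sweep stalls. The forcing succeeds only because the constraint families interlock: inside each sublattice the ``no monochromatic $\ell_3$'' hypothesis forces every lattice line, in all three directions, to have monochromatic runs of length at most $2$; the ``no monochromatic unit face'' condition constrains each triangle; and the centroid–majority rule ties the three sublattices together, so that the colors in $\Lambda$ react to constraints two layers away. The real content of the proof is therefore the finite case analysis showing that, starting from the normalized seed $\Delta$, these constraints chain together to determine each successive point of $L$. Once this is established, uniqueness up to isometry is immediate, since every valid $2$-coloring of $\E^2$ can be moved by an isometry into the normalized position at $\Delta$ and then coincides on $L$ with the single coloring produced by the forcing.
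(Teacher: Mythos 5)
Your reduction is set up correctly, and it is essentially the paper's own skeleton: the $\frac{1}{\sqrt{3}}$-grid splits into the unit triangular lattice and its two centroid cosets, all relevant $\ell_3$'s and side-$1$, side-$2$ equilateral triangles live inside a single coset, no unit face is monochromatic (Theorem~\ref{thm:Erdos}), and Lemma~\ref{lem:computationallemma} with its colour-swapped version becomes a centroid--majority rule tying the three cosets together. The gap is in the step that carries all the content of Lemma~\ref{lem:fixedgrid}: you assert that from the normalized seed (one red--blue--blue unit face together with its blue centroid) the local rules force the colour of every point of $L$, and your closing ``uniqueness up to isometry is immediate'' argument rests entirely on that assertion. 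It is false. Writing grid points as $ie_1+je_2$ with $e_1,e_2$ the $\frac{1}{\sqrt{3}}$-spaced basis vectors, the colouring ``red iff $i \bmod 4 \in \{0,1\}$'' satisfies every constraint in your list (no monochromatic $\ell_3$ or side-$1$, side-$2$ equilateral triangle inside the grid, and the centroid--majority rule), and so does the colouring ``red iff $j \bmod 4 \in \{1,2\}$''; yet both assign red--blue--blue to the face $\{e_1+e_2,\,2e_1-e_2,\,3e_1\}$ and blue to its centroid $2e_1$, while they disagree at $3e_2$. So the seed does not pin down the colouring; the forcing genuinely stalls, which is exactly why the paper introduces the extra point $x_0$ in Figure~\ref{fig:lemma2graph}, whose colour is \emph{not} determined, runs the propagation when $x_0$ is red, and in the case $x_0$ blue does not force a contradiction but instead locates an isometric copy of the seed configuration whose fourth point is red (Figure~\ref{fig:lemma2graph2}) and reruns the argument. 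That branching, plus the identification of the two branches by an isometry, is precisely how ``one valid colouring'' degrades to ``one valid colouring up to isometry''; your proposal has no mechanism for it and in fact asserts the opposite.

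Two further points. First, your justification that the verification ``reduces to a bounded computation'' because $L$ and the constraint families are invariant under a finite-index translation lattice does not follow: forcing from a fixed seed is not translation-equivariant, so periodicity of the constraints alone does not show the deductions reach every point of the infinite grid. What licenses the passage to all of $L$ in the paper is that the finite implication chain regenerates translated copies of the original coloured four-point configuration (the tuples $\{a_1,b_1,c_1,x_1\}$, $\{a_2,b_2,c_2,x_2\}$, and $\{x_0,x_2,a_2,a_0\}$ in the opposite direction), so the same deduction can be iterated inductively in all directions; some statement of this kind must be proved, not assumed. Second, the explicit implication chains themselves --- the analogue of the displayed sequences in the paper's proof and of the case analysis in Appendix~\ref{app:lemmacases} --- are exactly what you defer to ``one checks''; since Lemma~\ref{lem:fixedgrid} \emph{is} this finite forcing argument, a complete write-up has to exhibit them (or an equivalent verified computation), in addition to repairing the branching issue above.
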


\begin{proof}
    Without loss of generality, we can suppose there is a unit triangle $\set{a_0,b_0,c_0}$ colored red-blue-blue in the grid, and label a fourth point $x_0$ as shown in the left half of Figure \ref{fig:lemma2graph}. $x_0$ could be red or blue, so let us assume first that it is red.
    
    \begin{figure}[H]
        \centering
        \includegraphics[scale = 0.52, trim = 0 35 0 35, clip]{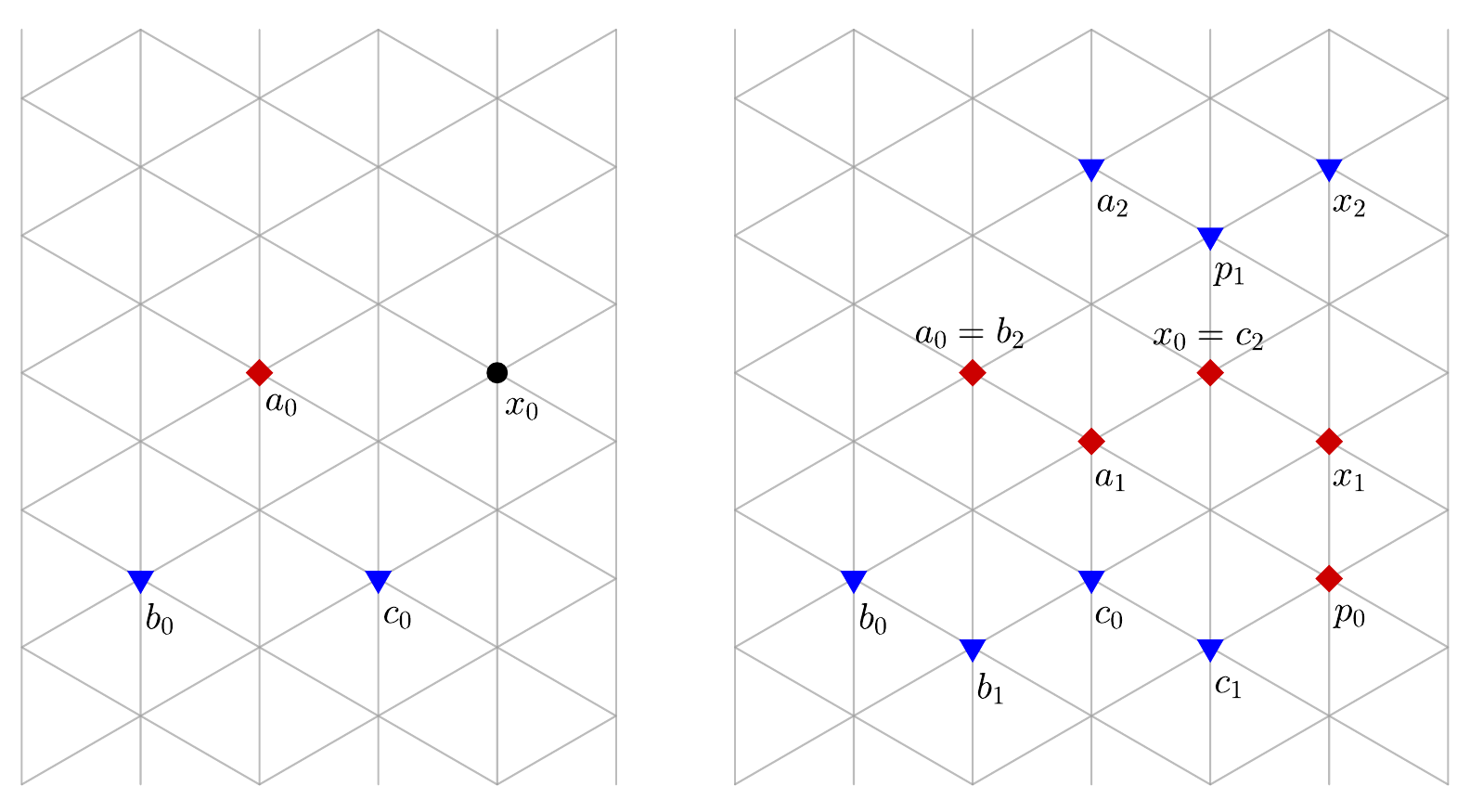}
        \caption{Populating the $\frac{1}{\sqrt{3}}$ scaled hexagonal grid with color}
        \label{fig:lemma2graph}
    \end{figure}
    The sequence of color implications leading to the picture in the second half of Figure \ref{fig:lemma2graph} is 
    \begin{align*}
        b_0, c_0 \implies p_0 \text{ \&  } b_1{}^*, 
        \ \ \
        a_0,x_0 &\implies a_1{}^* \text{ \& } a_2, 
        \ \ \
        c_0,a_1,b_1 \implies c_1{}^*, 
        \\ 
        p_0,x_0 &\implies x_1{}^*,
        \ \ \
        a_1,x_1 \implies p_1,\ \text{ and }
        \ \
        a_2,x_0,p_1 \implies x_2{}^*.
     \end{align*}
    The steps with ${}^*$ are applications of Lemma \ref{lem:computationallemma}. This process can now be repeated using the 4-tuples $\set{a_1,b_1,c_1,x_1}$, and $\set{a_2,b_2,c_2,x_2}$, and in the other direction with $\set{x_0,x_2,a_2,a_0}$ in order to inductively color all vertices in the grid. 

    The case that $x_0$ is blue can be reduced to the case where it is red using the following steps.
    \begin{figure}[H]
        \centering
        \includegraphics[scale = 0.54, trim = 0 0 0 0, clip]{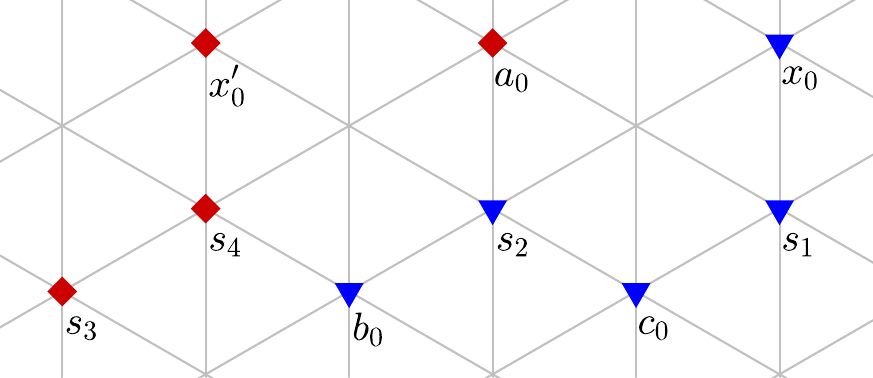}
        \caption{Reducing to the case that $x_0$ is red (just use $x_0'$ instead in the above)}
        \label{fig:lemma2graph2}
    \end{figure}
        \begin{align*}
        x_0, c_0 \implies s_1{}^*,
        \ \ \
        b_0, c_0 &\implies s_2{}^* \text{ \& } s_3, 
        \ \ \
        s_1,s_2 \implies s_4, 
        \ \ \ 
        b_0,s_3,s_4 \implies x_0'{}^*.
     \end{align*}
\end{proof}

To complete the proof of Theorem \ref{thm:ell3}, notice that if a point is colored red, all points at distance $\frac{4}{\sqrt{3}}$ in the grid are also red (Figure \ref{fig:coloredgrid}). By rotating the grid around this point, and around any sufficiently close blue point, we create two circles of radius $\frac{4}{\sqrt{3}}$ that must be entirely red and entirely blue that overlap, which is a contradiction. Therefore, the conditions of Lemmas \ref{lem:computationallemma} and \ref{lem:fixedgrid} must be false, i.e., $\E^2$ cannot be two-colored without a monochromatic $\ell_3$.
    \begin{figure}[H]
        \centering
        \includegraphics[scale = 0.35]{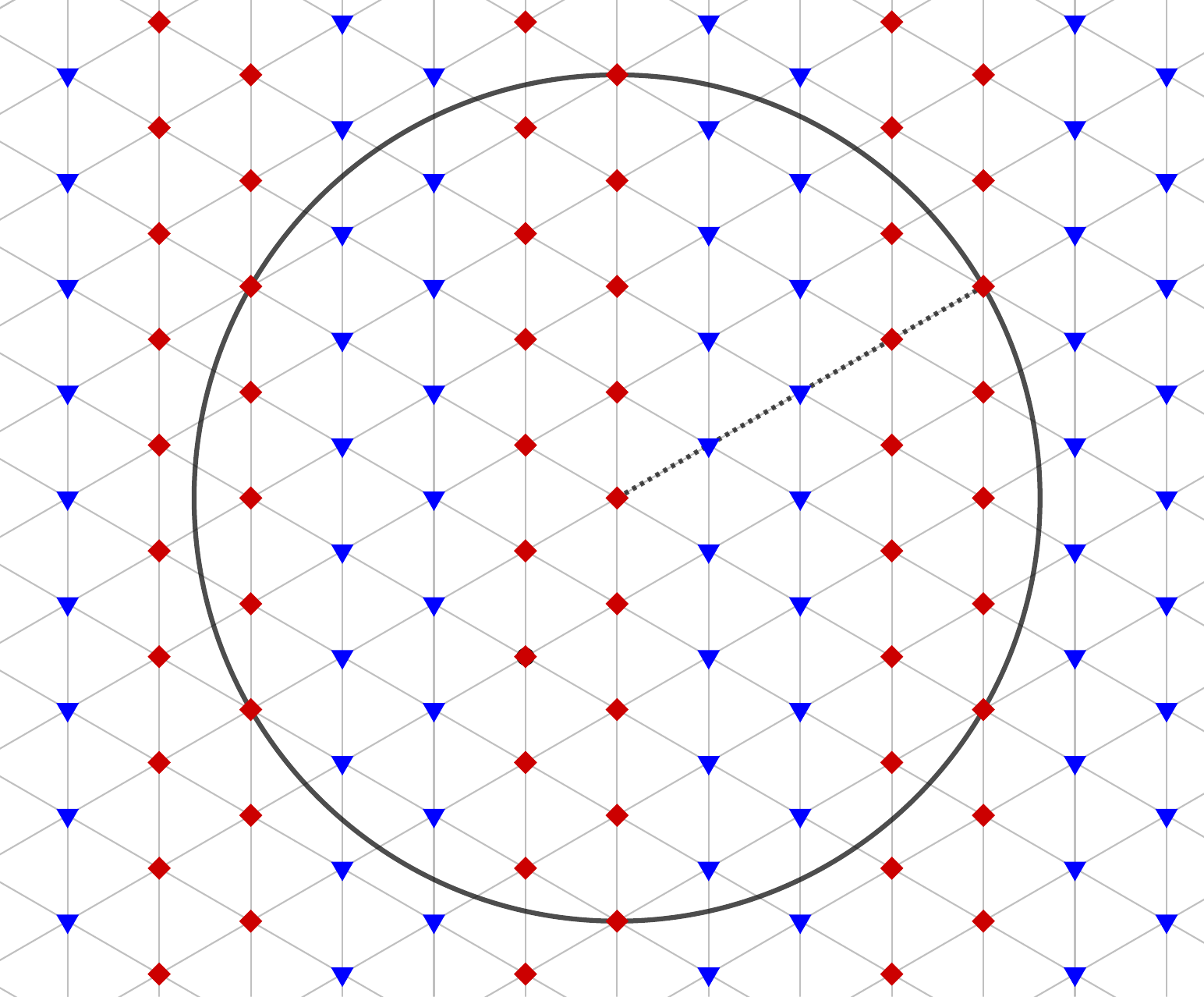}
        \caption{A circle with radius $\frac{4}{\sqrt{3}}$ in the colored grid}
        \label{fig:coloredgrid}
    \end{figure} 
    
\section*{Acknowledgements}
The authors thank J\'ozsef Solymosi and Joshua Zahl for helpful discussions. The third author also thanks Zixiang Xu for pointing out several relevant references. The authors thank the anonymous referees for their valuable comments and suggestions.

\bibliographystyle{abbrv}
\bibliography{references}

\newpage
\appendix

\section{The remaining cases for Lemma \ref{lem:computationallemma}}
\label{app:lemmacases}
To recap the instructions to verify this proof, in each case we add several points to Figure \ref{fig:lemma1cases}, labeled $\set{s_1,...\, , s_{m}}$, and observe that $s_{m}$ cannot be colored red or blue. One should visit each vertex $s_k$ in the graphs, and note that its color is implied by vertices in $\set{p_1,...\, ,p_4}$, $\set{q_1,...\, ,q_5,q_3'}$, and $\set{s_1,...\, ,s_{k-1}}$. We have previously solved the case where $q_1$ is red and $q_2$ is blue, and by symmetry $q_1$ blue and $q_2$ red. To clarify things, the following list contains all possibilities. It may be the case that
\vspace{3mm}

\begin{enumerate}[itemsep=0.2cm]
    \item $q_1$ is red and $q_2$ is blue (case 1, done in Section \ref{sec:proof}).
    \item $q_1$ and $q_2$ are red (case 2).
    \item $q_1$ and $q_2$ are blue, and
    \vspace{2mm}
    
    \begin{enumerate}[itemsep=0.2cm]
        \item $q_3$ is red (case 3).
        \item $q_3$ is blue, and
        \vspace{2mm}
        
        \begin{enumerate}[itemsep=0.2cm]
            \item $q_4$ and $q_5$ are red (case 4).
            \item $q_4$ is red, $q_5$ is blue (case 5).
            \item $q_4$ and $q_5$ are blue (case 6).
        \end{enumerate}
    \end{enumerate}
\end{enumerate}    
\vspace{3mm}

    \noindent
      \textbf{Case 2:} $q_1$ and $q_2$ are red.
    \begin{figure}[H]
        \centering
        \includegraphics[scale = 0.58, trim = 0 0 0 20]{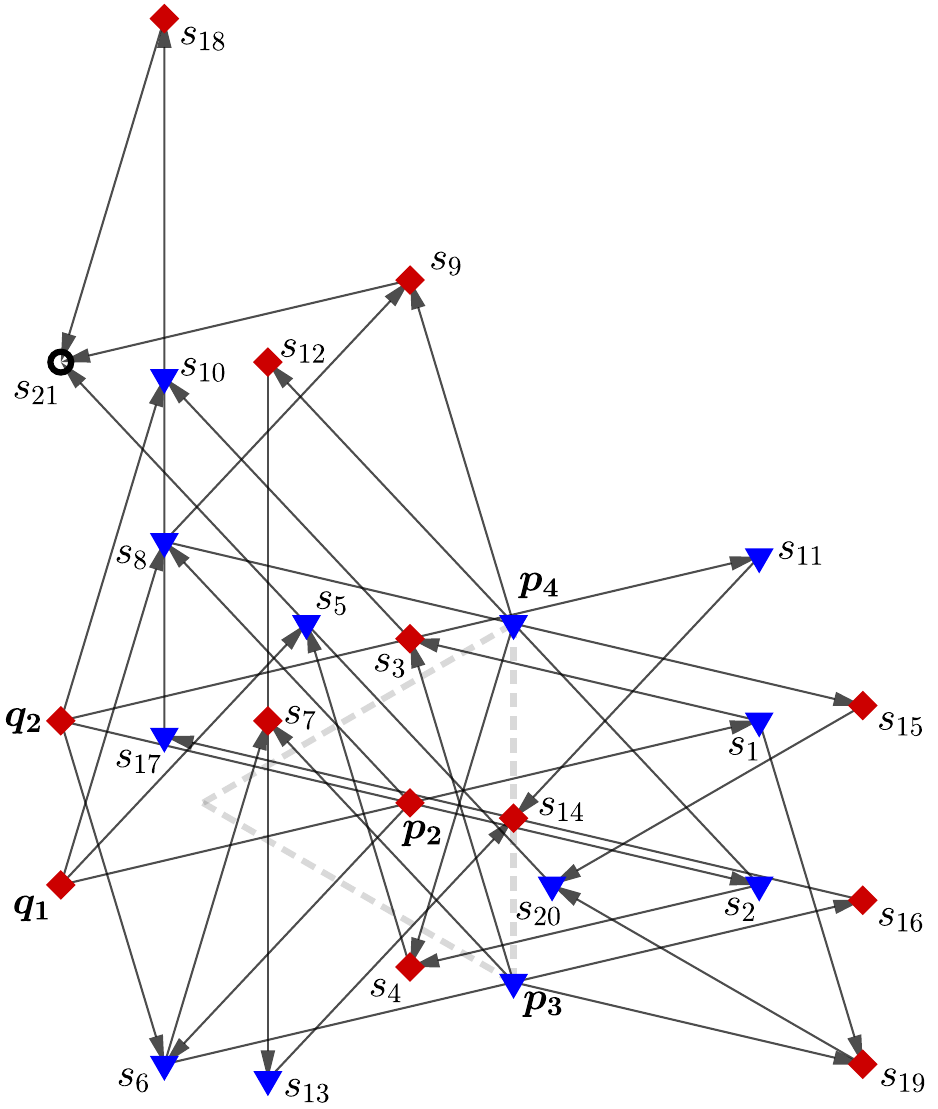}
        \caption{Case 2: $q_1$ and $q_2$ are red}
        \label{fig:lemma1case2}
    \end{figure}

    \noindent
    \textbf{Case 3:} $q_1$ and $q_2$ are blue, and $q_3$ (or $q_3'$) is red.
    \begin{figure}[H]
        \centering
        \includegraphics[scale = 0.55, trim = 0 0 0 14]{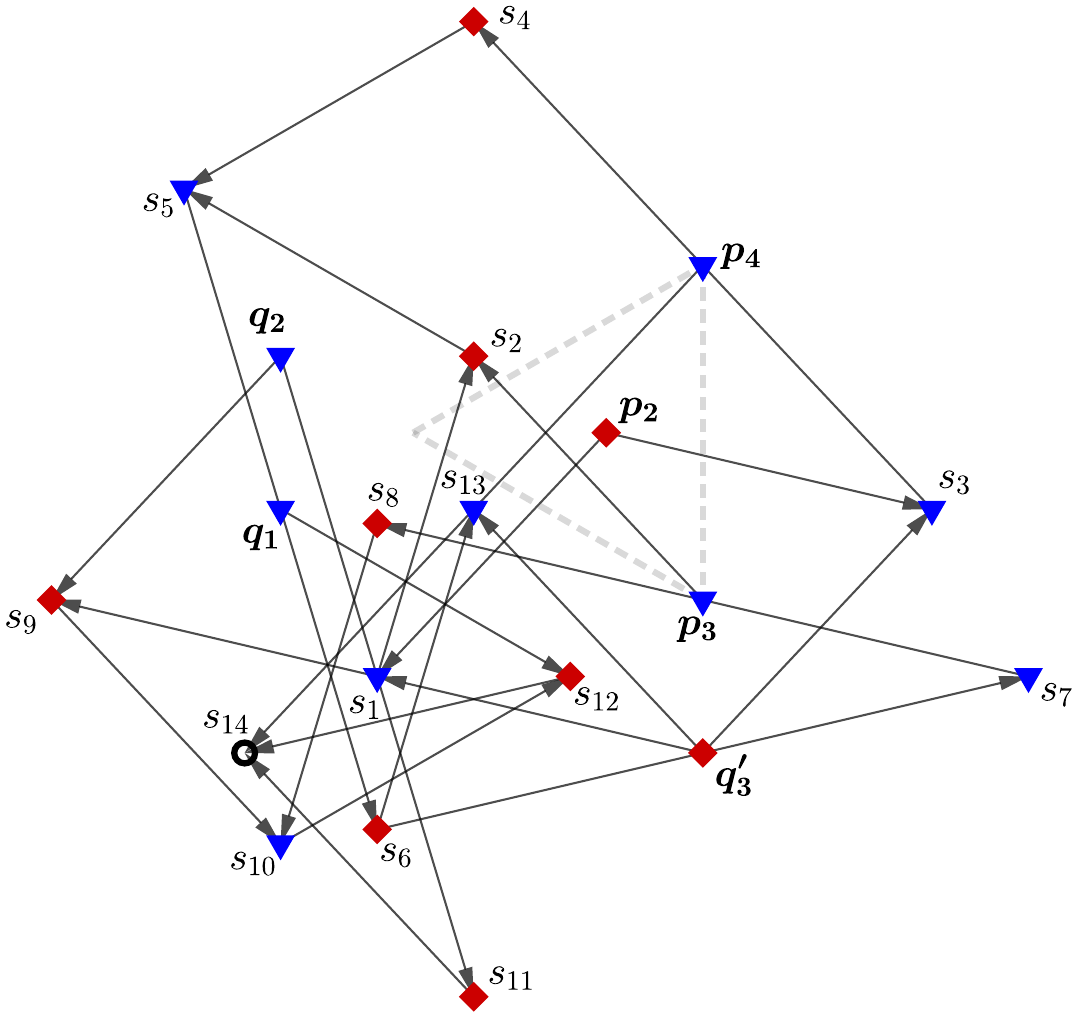}
        \caption{Case 3: $q_3$ (or $q_3'$) is red}
        \label{fig:lemma1case3}
    \end{figure}

    \noindent
    \textbf{Case 4:} $q_1$, $q_2$ $q_3$, and $q_3'$ are blue, $q_4$ and $q_5$ are red.
    \begin{figure}[H]
        \centering
        \includegraphics[scale = 0.585, trim = 0 0 0 14]{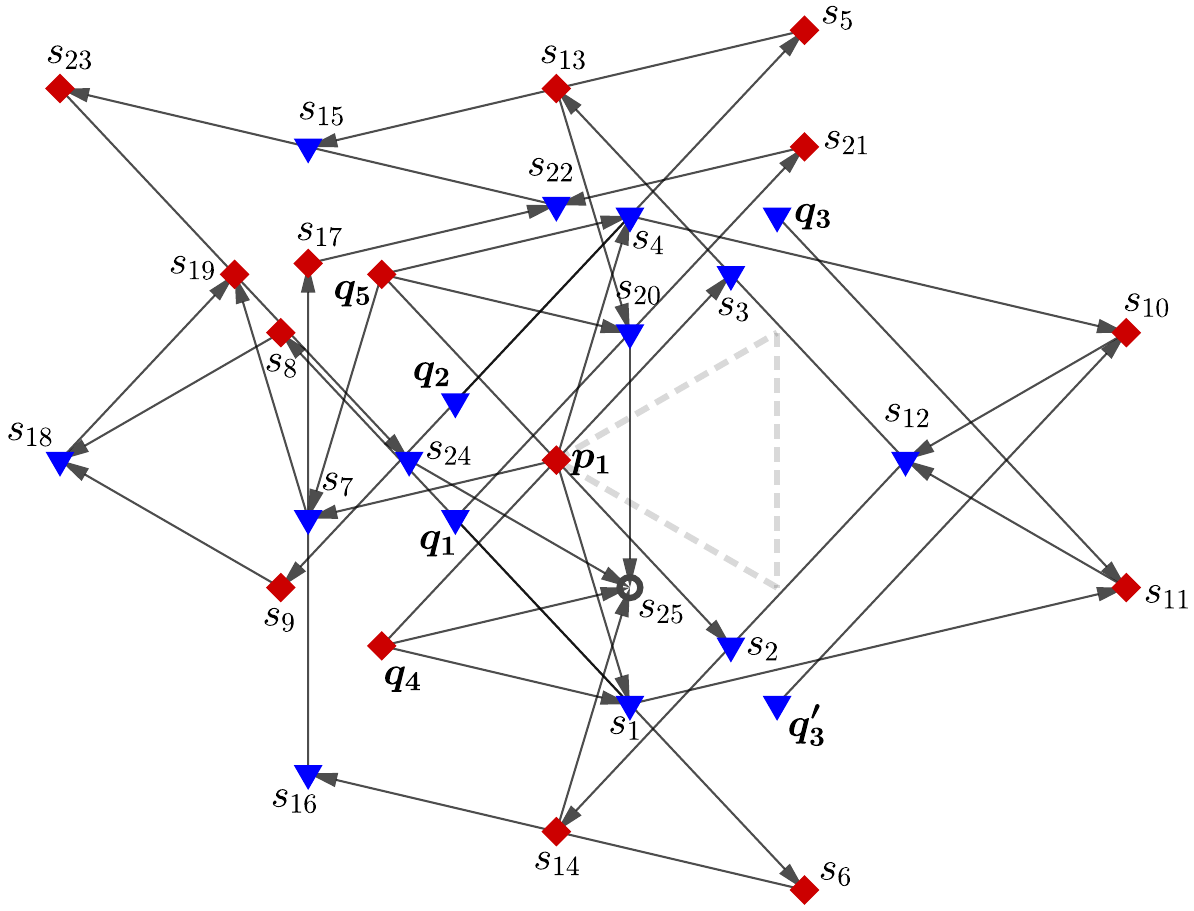}
        \caption{Case 4: $q_4$ and $q_5$ are red}
        \label{fig:lemma1case4}
    \end{figure}
    \vspace{3mm}

    \noindent
    \textbf{Case 5:} $q_1$, $q_2$ $q_3$, $q_3'$, and $q_5$ are blue, $q_4$ is red, 
   \begin{figure}[H]
        \centering
        \includegraphics[scale = 0.52, trim = 0 14 0 14]{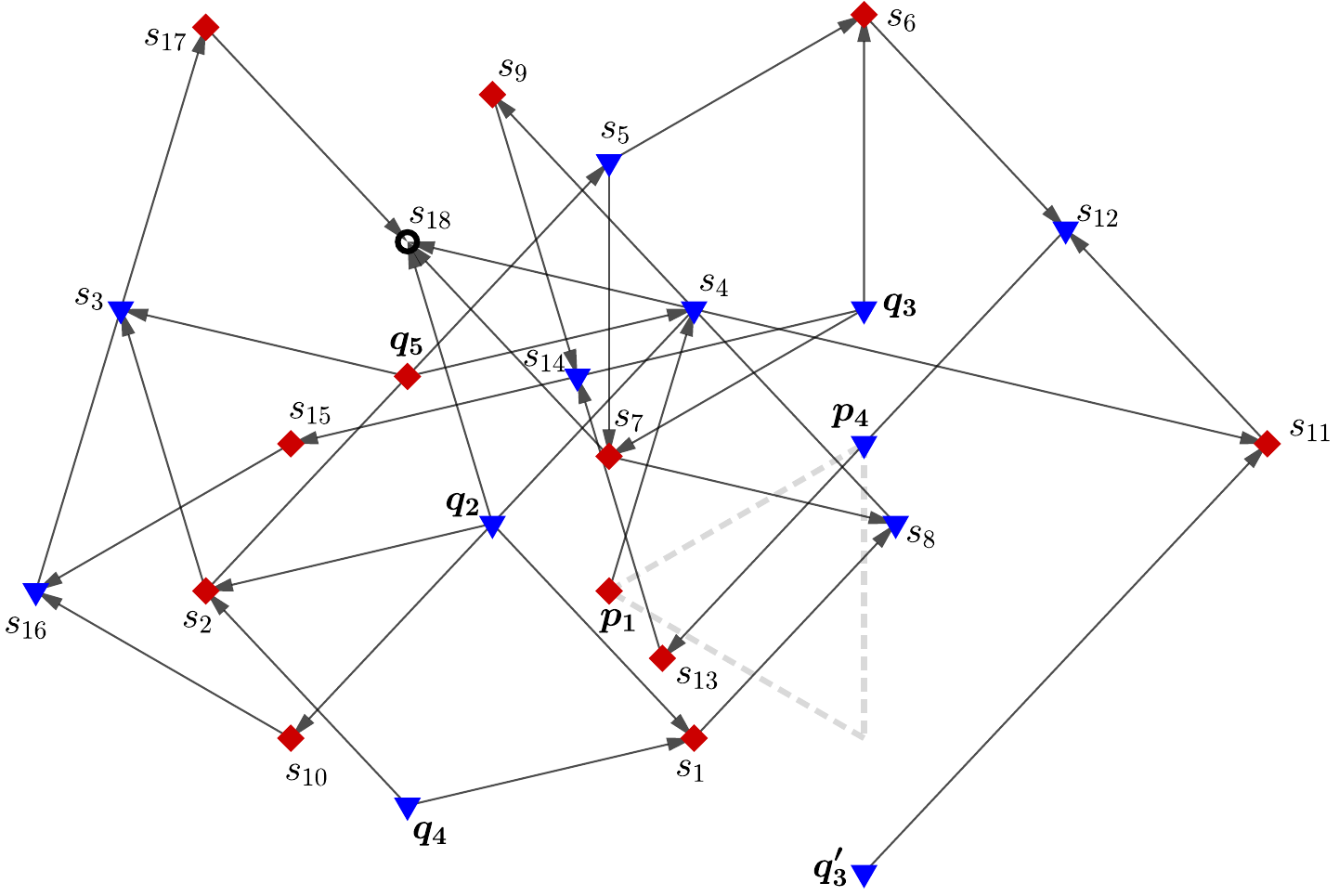}
        \caption{Case 5: $q_4$ is red, $q_5$ is blue}
        \label{fig:lemma1case5}
    \end{figure}

    \textbf{Case 6:} All of $q_1$, $q_2$ $q_3$, $q_3'$, $q_4$, and $q_5$ are blue.

   \begin{figure}[H]
        \centering
        \includegraphics[scale = 0.52, trim = 0 14 0 14]{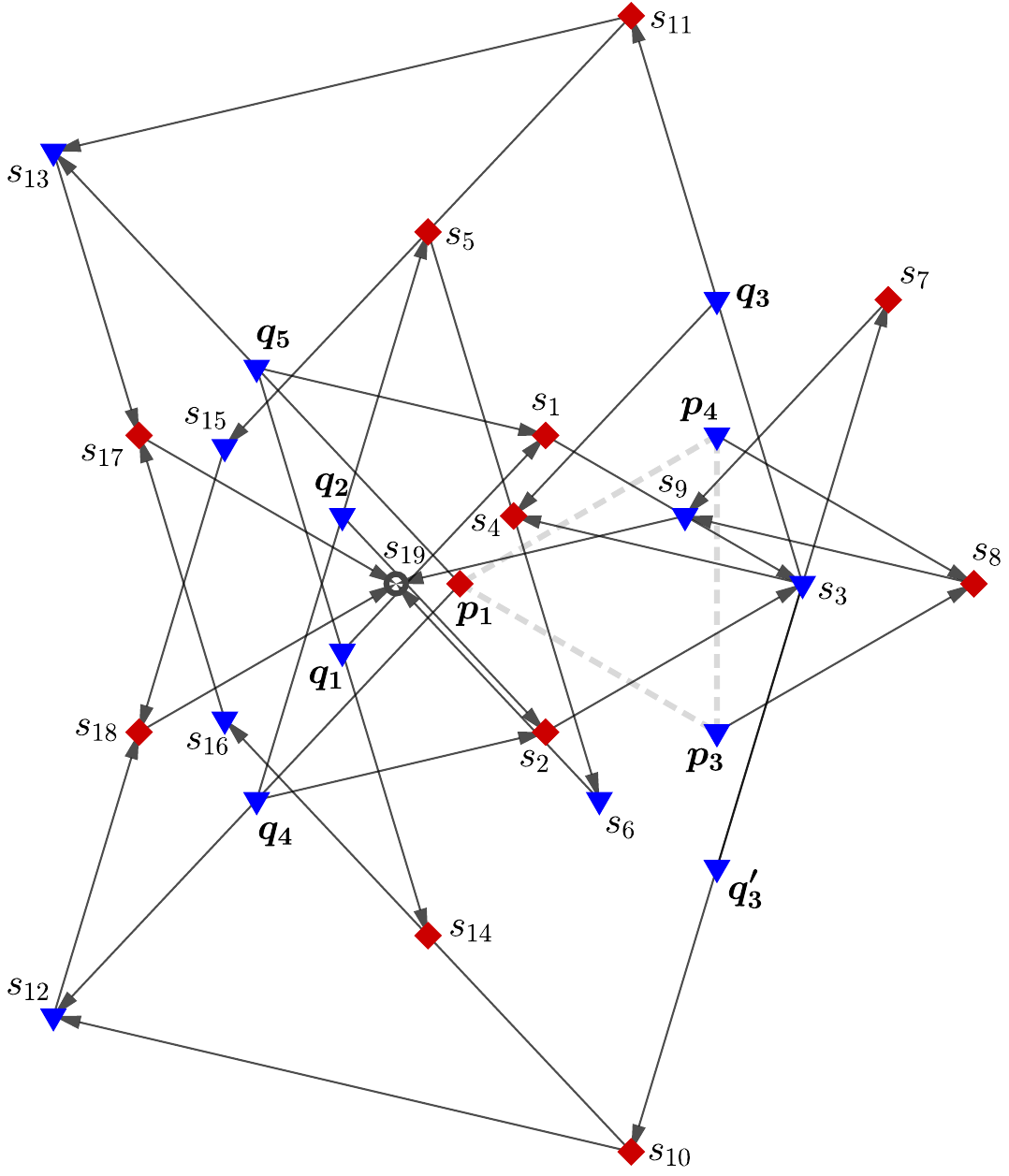}
        \caption{Case 6: $q_4$ and $q_5$ are blue}
        \label{fig:lemma1case6}
    \end{figure}

\section{Point set coordinates}
    \label{app:pointsets}

\textbf{Figure \ref{fig:lemma1graph}: 56 point contradiction} 
\begin{align*}
&[-4, 0, 0, 0]\ p_1,\ 
[0, 0, 0, 0]\ p_2,\ 
[2, 0, -6, 0]\ p_3,\ 
[2, 0, 6, 0]\ p_4,\ 
[-1, -3, 3, -1]\ q_1,\ 
\\
&[-1, -3, -3, 1]\ q_2,\ 
[2, 0, 0, 2]\ q_3,\ 
[2, 0, 0, -2]\ q_3',v
[-3, -3, -3, -1]\ q_4,\ 
[-3, -3, 3, 1]\ q_5,\ 
\\
&[-1, -3, -3, -3],\ 
[-1, 3, 3, 1],\ 
[-1, 3, -9, 1],\ 
[3, 3, -3, -1],\ 
[-2, 0, -6, 0],\ 
[1, 3, -9, -1],\ 
\\
&[-2, 0, 0, -2],\ 
[1, -3, -3, -1],\ 
[4, 0, 0, 0],\ 
[-1, 3, 9, -1],\ 
[0, 0, -6, -2],\ 
[-1, -3, 9, 1],\ 
\\
&[1, 3, -15, 1],\ 
[1, -3, 3, 1],\ 
[1, 3, -3, 1],\ 
[-2, 0, 6, 0],\ 
[-3, -3, 9, -1],\ 
[1, 3, -3, -3],\ 
\\
&[6, 0, 0, 2],\ 
[5, -3, -3, -1],\ 
[-1, -3, 3, 3],\ 
[3, 3, -9, 1],\ 
[-5, 3, 3, 1],\ 
[6, 0, 0, -2],\ 
\\
&[-2, 6, 0, 0],\ 
[1, 3, 3, -1],\ 
[5, -3, 3, 1],\ 
[-1, -3, -15, 1],\ 
[0, 0, 6, 2],\ 
[3, -3, -3, 1],\ 
\\
&[0, 0, 6, -2],\ 
[5, 3, -3, 1],\ 
[-1, 3, -3, -1],\ 
[0, 0, -6, 2],\ 
[0, 0, -12, 0],\ 
[-3, 3, 3, -1],\ 
\\
&[-1, -3, -9, -1],\ 
[3, -3, 3, -1],\ 
[-2, 0, 0, 2],\ 
[-1, 3, 3, -3],\ 
[-3, 3, -3, 1],\ 
[3, 3, 3, 1],\ 
\\
&[5, 3, 3, -1],\ 
[1, -3, 3, -3],\ 
[1, 3, 9, 1],\ 
[1, 3, 3, 3].
\end{align*}

\
    
\textbf{Figure \ref{fig:lemma1case1}: Lemma \ref{lem:computationallemma} Case 1} 
\begin{align*}
&[0, 0, 0, 0]\ p_2,\ 
[2, 0, -6, 0]\ p_3,\ 
[2, 0, 6, 0]\ p_4,\ 
[-1, -3, 3, -1]\ q_1,\ 
[-1, -3, -3, 1]\ q_2,\ 
\\
&[1, -3, -3, -1]\ s_{1},\ 
[1, 3, 3, -1]\ s_{2},\ 
[3, 3, 3, 1]\ s_{3},\ 
[0, 0, 6, -2]\ s_{4},\ 
[3, -3, -3, 1]\ s_{5},\ 
\\
&[3, -3, 9, 1]\ s_{6},\ 
[0, 0, -6, -2]\ s_{7},\ 
[-3, -3, 3, 1]\ s_{8},\ 
[-2, -6, 0, 0]\ s_{9},\ 
[0, -6, 0, 2]\ s_{10},\ 
\\
&[1, -3, 3, 1]\ s_{11},\ 
[2, 0, 0, 2]\ s_{12},\ 
[1, -3, 3, -3]\ s_{13},\ 
[-1, 3, 9, -1]\ s_{14},\ 
[6, 0, 0, -2]\ s_{15},\ 
\\
&[4, 0, 0, 0]\ s_{16},\ 
[5, 3, 3, -1]\ s_{17},\ 
[-1, 3, -3, -1]\ s_{18}.
\end{align*}

\
    
\textbf{Figure \ref{fig:lemma1case2}: Lemma \ref{lem:computationallemma} Case 2} 
\begin{align*}
&[0, 0, 0, 0]\ p_2,\ 
[2, 0, -6, 0]\ p_3,\ 
[2, 0, 6, 0]\ p_4,\ 
[-1, -3, 3, -1]\ q_1,\ 
[-1, -3, -3, 1]\ q_2,\ 
\\
&[1, 3, -3, 1]\ s_{1},\ 
[1, 3, 3, -1]\ s_{2},\ 
[0, 0, -6, 2]\ s_{3},\ 
[0, 0, 6, -2]\ s_{4},\ 
[-2, 0, 6, 0]\ s_{5},\ 
\\
&[1, -3, -3, -1]\ s_{6},\ 
[3, -3, -3, 1]\ s_{7},\ 
[1, -3, 3, 1]\ s_{8},\ 
[0, 0, 6, 2]\ s_{9},\ 
[1, -3, -3, 3]\ s_{10},\ 
\\
&[1, 3, -9, 3]\ s_{11},\ 
[3, -3, 9, 1]\ s_{12},\ 
[3, -3, -15, 1]\ s_{13},\ 
[2, 0, -12, 2]\ s_{14},\ 
[3, 3, 9, -1]\ s_{15},\ 
\\
&[3, 3, -9, 1]\ s_{16},\ 
[1, -3, -15, 3]\ s_{17},\ 
[1, -3, 9, 3]\ s_{18},\ 
[3, 3, -3, -1]\ s_{19},\ 
[-3, 3, 3, -1]\ s_{20},\ 
\\
&[-1, -3, 9, 1]\ s_{21}.
\end{align*}

\
    
\textbf{Figure \ref{fig:lemma1case3}: Lemma \ref{lem:computationallemma} Case 3} 
\begin{align*}
& [0, 0, 0, 0]\ p_2,\ 
[2, 0, -6, 0]\ p_3,\ 
[2, 0, 6, 0]\ p_4,\ 
[-1, -3, 3, -1]\ q_1,\ 
[-1, -3, -3, 1]\ q_2,\ 
\\
&[2, 0, 0, -2]\ q_3',\ 
[1, -3, -3, -1]\ s_{1},\ 
[3, -3, -3, 1]\ s_{2},\ 
[1, 3, 3, -1]\ s_{3},\ 
[3, -3, 9, 1]\ s_{4},\ 
\\
&[-3, -3, 3, 1]\ s_{5},\ 
[1, -3, 3, -3]\ s_{6},\ 
[3, 3, -3, -1]\ s_{7},\ 
[1, -3, -9, 1]\ s_{8},\ 
[0, -6, -6, 0]\ s_{9},\ 
\\
&[-1, -3, -9, -1]\ s_{10},\ 
[3, -3, -3, -3]\ s_{11},\ 
[5, -3, -3, -1]\ s_{12},\ 
[3, -3, 3, -1]\ s_{13},\ 
\\
&[4, -6, 0, -2]\ s_{14}. 
\end{align*}

\
    
\textbf{Figure \ref{fig:lemma1case4}: Lemma \ref{lem:computationallemma} Case 4} 
\begin{align*}
&[-4, 0, 0, 0]\ p_1,\ 
[-1, -3, 3, -1]\ q_1,\ 
[-1, -3, -3, 1]\ q_2,\ 
[2, 0, 0, 2]\ q_3,\ 
[2, 0, 0, -2]\ q_3',\ 
\\
&[-3, -3, -3, -1]\ q_4,\ 
[-3, -3, 3, 1]\ q_5,\ 
[-2, 0, 0, -2]\ s_1,\ 
[-5, 3, -3, -1]\ s_2,\ 
[-5, 3, 3, 1]\ s_3,\ 
\\
&[-2, 0, 0, 2]\ s_4,\ 
[-3, 3, 3, 3]\ s_5,\ 
[-3, 3, -3, -3]\ s_6,\ 
[-5, -3, 3, -1]\ s_7,\ 
[0, -6, 6, 0]\ s_8,\ 
\\
&[0, -6, -6, 0]\ s_9,\ 
[0, 6, 6, 0]\ s_{10},\ 
[0, 6, -6, 0]\ s_{11},\ 
[-6, 6, 0, 0]\ s_{12},\ 
[-4, 0, 6, 2]\ s_{13},\ 
\\
&[-4, 0, -6, -2]\ s_{14},\ 
[-5, -3, 9, 1]\ s_{15},\ 
[-5, -3, -9, -1]\ s_{16},\ 
[-5, -3, 15, -1]\ s_{17},\ 
\\
&[-6, -6, 0, 0]\ s_{18},\ 
[-7, -3, 3, 1]\ s_{19},\ 
[-2, 0, 6, 0]\ s_{20},\ 
[-3, 3, 9, 1]\ s_{21},\ 
[-4, 0, 12, 0]\ s_{22},\ 
\\
&[-6, -6, 6, 2]\ s_{23},\ 
[-8, 0, 0, 0]\ s_{24},\ 
[-2, 0, -6, 0]\ s_{25}.
\end{align*}

\
    
\textbf{Figure \ref{fig:lemma1case5}: Lemma \ref{lem:computationallemma} Case 5} 
\begin{align*}
&[-4, 0, 0, 0]\ p_1,\ 
[2, 0, 6, 0]\ p_4,\ 
[-1, -3, -3, 1]\ q_2,\ 
[2, 0, 0, 2]\ q_3,\ 
[2, 0, 0, -2]\ q_3',\ 
\\
&[-3, -3, -3, -1]\ q_4,\ 
[-3, -3, 3, 1]\ q_5,\ 
[-2, 0, -6, 0]\ s_{1},\ 
[-2, -6, 0, 0]\ s_{2},\ 
[-4, -6, 0, 2]\ s_{3},\ 
\\
&[-2, 0, 0, 2]\ s_{4},\ 
[-4, 0, 6, 2]\ s_{5},\ 
[2, 0, 12, 2]\ s_{6},\ 
[-4, 0, -6, 2]\ s_{7},\ 
[-3, 3, -3, 1]\ s_{8},\ 
\\
&[-1, -3, 3, 3]\ s_{9},\ 
[0, -6, -6, 0]\ s_{10},\ 
[0, 6, 6, 0]\ s_{11},\ 
[1, 3, 9, 1]\ s_{12},\ 
[3, -3, 3, -1]\ s_{13},\ 
\\
&[1, -3, 3, 1]\ s_{14},\ 
[0, -6, 6, 0]\ s_{15},\ 
[-6, -6, 0, 0]\ s_{16},\ 
[-2, -6, 0, 4]\ s_{17},\ 
[-3, -3, -3, 3]\ s_{18}. 
\end{align*}

\
    
\textbf{Figure \ref{fig:lemma1case6}: Lemma \ref{lem:computationallemma} Case 6} 
\begin{align*}
&[-4, 0, 0, 0]\ p_1,\ 
[2, 0, -6, 0]\ p_3,\ 
[2, 0, 6, 0]\ p_4,\ 
[-1, -3, 3, -1]\ q_1,\ 
[-1, -3, -3, 1]\ q_2,\ 
\\
&[2, 0, 0, 2]\ q_3,\ 
[2, 0, 0, -2]\ q_3',\ 
[-3, -3, -3, -1]\ q_4,\ 
[-3, -3, 3, 1]\ q_5,\ 
[-2, 0, 6, 0]\ s_{1},\ 
\\
&[-2, 0, -6, 0]\ s_{2},\ 
[4, 0, 0, 0]\ s_{3},\ 
[3, -3, -3, 1]\ s_{4},\ 
[1, -3, -3, 3]\ s_{5},\ 
[5, -3, -3, -1]\ s_{6},\ 
\\
&[6, 0, 0, 2]\ s_{7},\ 
[8, 0, 0, 0]\ s_{8},\ 
[7, -3, -3, 1]\ s_{9},\ 
[0, 0, 0, -4]\ s_{10},\ 
[0, 0, 0, 4]\ s_{11},\ 
\\
&[-2, -6, -6, -2]\ s_{12},\ 
[-2, -6, 6, 2]\ s_{13},\ 
[1, -3, 3, -3]\ s_{14},\ 
[2, -6, -6, 2]\ s_{15},\ 
\\
&[2, -6, 6, -2]\ s_{16},\ 
[0, -6, 6, 0]\ s_{17},\ 
[0, -6, -6, 0]\ s_{18},\ 
[6, -6, 0, 0]\ s_{19}.
\end{align*}
\end{document}